\newtheorem{thm}{Theorem}
\newtheorem{lem}[thm]{Lemma}
\newtheorem{conj}[thm]{Conjecture}
\theoremstyle{remark}
\newcommand{\CC}{\mathbb{C}}
\newcommand{\FF}{\mathbb{F}}
\newcommand{\QQ}{\mathbb{Q}}
\newcommand{\PP}{\mathbb{P}}
\newcommand{\RR}{\mathbb{R}}
\newcommand{\ZZ}{\mathbb{Z}}
\newcommand{\C}{\mathcal{C}}
\renewcommand{\L}{\mathcal{L}}
\newcommand{\M}{\mathcal{M}}
\renewcommand{\S}{\mathcal{S}}
\newcommand{\WW}{\mathcal{W}}
\newcommand{\X}{\mathcal{X}}
\newcommand{\cross}{\times}
\newcommand{\size}[1]{\lvert #1 \rvert}
\newcommand{\isom}{\cong}
\renewcommand{\(}{\left(}
\renewcommand{\)}{\right)}
\renewcommand{\epsilon}{\varepsilon}
\numberwithin{equation}{section}
\title{Large orbits on Markoff-type K3 surfaces over finite fields}
\author{Evan M. O'Dorney}
\begin{document}
  
  \maketitle

\begin{abstract}%
We study the surface $\mathcal{W}_k : x^2 + y^2 + z^2 + x^2 y^2 z^2 = k x y z$ in $(\mathbb{P}^1)^3$, a tri-involutive K3 (TIK3) surface. We explain a phenomenon noticed by Fuchs, Litman, Silverman, and Tran: over a finite field of order $\equiv 1$ mod $8$, the points of $\mathcal{W}_4$ do not form a single large orbit under the group $\Gamma$ generated by the three involutions fixing two variables and a few other obvious symmetries, but rather admit a partition into two $\Gamma$-invariant subsets of roughly equal size. The phenomenon is traced to an explicit double cover of the surface.
\end{abstract}

% MSC:
% 11G25   	Varieties over finite and local fields
% 14J27   	Elliptic surfaces, elliptic or Calabi-Yau fibrations
% 14J28   	$K3$ surfaces and Enriques surfaces

\section{Introduction}

A surface $\WW$ defined by a general $(2, 2, 2)$-form on $\PP^1 \cross \PP^1 \cross \PP^1$ is a K3 surface, possibly singular, admitting three involutions $\sigma_x, \sigma_y, \sigma_z$ that fix two coordinates and flip the third to the other root of the appropriate quadratic equation. In \cite{SilvermanK3}, Fuchs, Litman, Silverman and Tran call these \emph{tri-involutive K3 (TIK3)} surfaces and ask the natural question of the orbit structure of the $\FF_q$-points of such a $\WW$ under the group $\C_2 \ast \C_2 \ast \C_2$ generated by the $\sigma_i$. For such a large group, in the absence of additional structure, one normally conjectures that for $q$ large, the points form a single large orbit and possibly some very small orbits.

A TIK3 surface is given by $27$ coefficients. It is reasonable to begin with families that admit extra symmetries. For instance, the \emph{Markoff surface}
\[
  \M : x^2 + y^2 + z^2 = 3 x y z,
\]
whose $\ZZ$-points arise in Diophantine approximation, has an action by $\S_3 \ltimes (\C_2 \cross \C_2) \isom \S_4$ by permuting the coordinates and negating an even number of them. In 1879, Markoff \cite{MarkoffI} showed that the $\ZZ$-points $\M(\ZZ)$ form a single orbit under the group $\Gamma$ generated by all the symmetries named. In 1991, Baragar \cite{Baragar} conjectured that, after removing the singular point $(0,0,0)$, $\M(\FF_p)$ consists of one $\Gamma$-orbit for all primes $p$. In 2016, Bourgain, Gamburd, and Sarnak \cite{BGS16_short, BGS_I} have proved that all orbits but one have size $O_\epsilon(q^\epsilon)$ for all $\epsilon > 0$. Recently, Chen \cite{ChenMarkoff} announced a proof that Baragar's conjecture holds (that is, there is only one nonsingular $\Gamma$-orbit) for all but finitely many $p$.

An even richer symmetry group occurs for the family
\[
  \WW_k : x^2 + y^2 + z^2 + x^2 y^2 z^2 = k x y z,
\]
in which one can also reciprocate an even number of the coordinates. Via numerical experiments, the authors of \cite{SilvermanK3} found that the case $k = \pm 4$, $q \equiv 1$ mod $8$ is exceptional in that there are \emph{two} largest orbits of roughly equal size. They conjecture that there is an algebro-geometric reason for this aberration and that it is the only one in its family:

\begin{conj}[\cite{SilvermanK3}, \textsection 13]
  Let $q$ be an odd prime power. Let $\Gamma$ be the subgroup of automorphisms of $\WW_k$ generated by the involution $\sigma_x$, the finite-order symmetries $(x,y,z) \mapsto (x,-y,-z)$ and $(x,y,z) \mapsto (x,y^{-1}, z^{-1})$, and the permutations of the coordinates. Then the large $\Gamma$-orbits of the $\FF_q$-points of $\WW_k$ are as follows:
  \begin{enumerate}[$($a$)$]
    \item For $k = \pm 4$ or $\pm 4\sqrt{-1}$, if $q \equiv 1$ mod $8$, there are two orbits of size $\frac{1}{2} q^2 + o(q^2)$.
    \item In all other cases, there is one orbit of size $q^2 + o(q^2)$.
  \end{enumerate}
  All other orbits are necessarily of size $o(q^2)$.
\end{conj}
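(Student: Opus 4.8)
The plan is to separate the statement into an \emph{algebro-geometric} half, which accounts for the dichotomy (a) vs.\ (b) and supplies the algebro-geometric reason anticipated in \cite{SilvermanK3}, and an \emph{analytic} half, which promotes a $\Gamma$-invariant partition into honest orbits of full size. A convenient preliminary reduction: the coordinatewise scaling $(x,y,z)\mapsto(\zeta x,\zeta y,\zeta z)$ with $\zeta^2=-1$ carries $\WW_k$ to $\WW_{\zeta k}$, so over any field containing $\sqrt{-1}$ the four exceptional values $k\in\{\pm4,\pm4\sqrt{-1}\}$ are linked and one may concentrate on $\WW_4$; and $q\equiv1\bmod8$ is exactly the condition that $\sqrt{-1}$ and $\sqrt2$ both lie in $\FF_q$, which is the arithmetic input to watch.

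For the first half I would hunt for a rational function $D=D_k$ on $\WW_k$ whose square-class $[D(P)]\in\FF_q^\times/(\FF_q^\times)^2$ is constant along $\Gamma$-orbits of nonsingular $\FF_q$-points (the zero/pole locus of $D$ is a curve, hence only $O(q)$ points, and is harmless). The most obvious quadratic quantities --- the discriminants $\Delta_z=k^2x^2y^2-4(1+x^2y^2)(x^2+y^2)$ of the quadratics cut out by $\sigma_z$, and so on --- turn out to be squares already on $\WW_k$ (indeed $\Delta_z=\bigl(2z(1+x^2y^2)-kxy\bigr)^2$), so $D$ must be assembled from finer data: a resolvent of the quartic obtained by eliminating one variable, or a ratio of ``multipliers'' $z+z'$, $zz'$, normalized by even powers. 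The key computation is to transport $D$ through the generators of $\Gamma$ --- write $D\circ\sigma_x=D\cdot h_x$ with $h_x$ rational on $\WW_k$, simplify $h_x$ via the relations between the roots and coefficients of a quadratic, and likewise for the finite-order symmetries and the permutations. I expect that for generic $k$ some $h_x$ is neither a square nor a constant, so $[D]$ is genuinely \emph{not} $\Gamma$-invariant --- and (granting the analytic half) there is one large orbit --- whereas for $k\in\{\pm4,\pm4\sqrt{-1}\}$, and precisely there, each $h_x$ collapses to a perfect square times an explicit \emph{constant} $c_x$; then $[D]$ is $\Gamma$-invariant over $\FF_q$ iff $c_x,c_y,\dots$ are all squares in $\FF_q$, and the conjunction of these conditions should simplify to exactly $q\equiv1\bmod8$. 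When it holds, a Weil estimate for the fibers of $D$ shows that each square-class is hit by $\tfrac12q^2+O(q^{3/2})$ points, and the analytic half then identifies all but $o(q^2)$ of each class as a single orbit, giving the two orbits of (a); when it fails, $\Gamma$ mixes the two classes and we fall back to (b). One anticipates that the conceptual source is that $\WW_4$ is birational to a Kummer surface $\mathrm{Kum}(E_1\cross E_2)$ with the $E_i$ admitting complex multiplication by $\ZZ[\sqrt{-1}]$ (whence both the $\sqrt{-1}$ and, via rationality of $2$- and $4$-torsion, the mod-$8$ congruence), with $D$ pulled back from $E_1\cross E_2$ and the double cover a resolution of the abelian surface; verifying this would make the construction and its $\Gamma$-equivariance transparent.

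For the analytic half --- that each of the one or two pieces is a single $\Gamma$-orbit up to $o(q^2)$, and that all remaining orbits are $o(q^2)$ --- I would follow Bourgain--Gamburd--Sarnak \cite{BGS_I} as completed by Chen \cite{ChenMarkoff}. First reduce, via fundamental domains for $\Gamma$ on the archimedean points, strong approximation, and an effective Chebotarev/sieve input, to the finite statement that the ``Markoff graph'' on $\WW_k(\FF_q)$ --- vertices the nonsingular points, edges the $\sigma_i$ and the finite symmetries --- has a connected component (two, in case (a), one per square-class of $D$) of size $q^2+o(q^2)$. Then prove this either by an $L^2$-flattening/expander argument whose arithmetic input is a uniform bound $O(q^{1+\epsilon})$ on the number of $\FF_q$-points of any $\Gamma$-invariant proper subvariety of $\WW_k$, or by transposing Chen's combinatorial connectivity argument, which needs a catalogue of the fixed loci of the $\sigma_i$ and their compositions; in either route the finitely many small $q$ must be treated separately. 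A tempting shortcut is an explicit $\Gamma$-equivariant correspondence $\WW_k\dashrightarrow\M$ that eliminates the reciprocation symmetry, allowing one to inherit connectivity directly from Chen's theorem for the Markoff surface. This analytic half is the main obstacle: the double cover only \emph{explains} the obstruction to a single orbit and says nothing about transitivity \emph{within} a square-class, and excluding intermediate orbits of size between $q^\epsilon$ and $q^2$ is precisely the theorem that, for the Markoff surface by itself, required the combined efforts of \cite{BGS_I} and \cite{ChenMarkoff}. Carrying that apparatus over to $\WW_k$ --- the uniform control of invariant subvarieties, the larger finite symmetry group, the degenerate geometry at $k=\pm4$, and the small-$q$ cases --- is where the genuine difficulty lies; the cleanest hope is the reduction to $\M$ above, but making such a correspondence literally $\Gamma$-equivariant, rather than equivariant only for a subgroup of small index, is itself delicate.
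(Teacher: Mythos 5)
You are attempting to prove a statement that the paper itself records only as a \emph{conjecture} (of Fuchs--Litman--Silverman--Tran); the paper does not prove it, and proves only one fragment of it: for $k=\pm4$ and $q\equiv1\bmod 8$ there is a $\Gamma$-invariant partition $\WW_4(\FF_q)=U_1\sqcup U_2$ with $\size{U_i}=\tfrac12q^2+O(q^{3/2})$, hence an \emph{upper bound} of $\tfrac12q^2+O(q^{3/2})$ on orbit sizes. Your proposal is an honest research program aimed at the full conjecture, but as a proof it has two genuine gaps. First, in your ``algebro-geometric half'' the invariant $D$ is never exhibited. You correctly observe that the naive discriminants are already squares on $\WW_k$ (the paper notes $\Delta_x=(x-\sigma_x^*(x))^2$), but you then gesture at ``a resolvent of the quartic'' or ``a ratio of multipliers'' without producing anything. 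The actual construction in the paper is that for $k=4$ the discriminant \emph{factors over} $\ZZ[i]$, $\Delta_x=4\,(y(1-z^2)+iz(1-y^2))(y(1-z^2)-iz(1-y^2))$, and the square class of one factor $\Delta_{yz}=y(1-z^2)+iz(1-y^2)$ is the invariant; verifying invariance under the transpositions is not formal (it rests on the identity $-2\Delta_{xz}\Delta_{yz}+(z^2+2z-1)f=g^2$ found by computation), and one must also check non-constancy of the square class (the paper restricts to the line $(iy,y,0)$). Without a candidate $D$, your scheme of writing $D\circ\sigma=D\cdot h$ and hoping the $h$'s collapse to constants exactly at $k=\pm4,\pm4\sqrt{-1}$ cannot be evaluated, and your statements that this happens ``precisely there'' and that the square conditions ``simplify to exactly $q\equiv1\bmod8$'' are themselves conjectural. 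The Kummer-surface speculation is likewise unverified and plays no role in the paper.

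Second, and more seriously, your ``analytic half'' --- that each invariant class is a single orbit up to $o(q^2)$ and that all remaining orbits are $o(q^2)$ --- is not proved by you, is not proved in the paper, and is not currently available in the literature: the analogues of Bourgain--Gamburd--Sarnak and Chen for TIK3 surfaces (uniform control of $\Gamma$-invariant subvarieties, the enlarged symmetry group, the degenerate geometry at $k=\pm4$, small $q$) have not been carried out, and your proposed shortcut via a $\Gamma$-equivariant correspondence to the Markoff surface is, as you yourself note, delicate and unsubstantiated. You identify the right obstacles, but flagging an obstacle is not the same as overcoming it; the statement remains a conjecture, and your write-up should present the invariant-construction half as the provable part (matching the paper's Theorem 2) and the transitivity half as open.
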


\section{Main theorem}

In this paper, we focus our attention on $\WW_4$. (The surfaces $\WW_k$ and $\WW_{-k}$ are isomorphic via negating the coordinates, and if the ground field contains a square root $i$ of $-1$, then $\WW_k$ and $\WW_{ik}$ are also isomorphic via multiplying all coordinates by $-i$.) Incidentally, over $\RR$, the variety
\[
  \WW_4 : x^2 + y^2 + z^2 + x^2 y^2 z^2 = 4 x y z
\]
is an equality case of the four-term AM--GM inequality
\[
  \frac{x^2 + y^2 + z^2 + x^2 y^2 z^2}{4} \geq \sqrt[4]{x^2 \cdot y^2 \cdot z^2 \cdot x^2 y^2 z^2} = \size{x y z}.
\]
In particular, $\WW_4$ has no real points other than the eight singular points, which are $(0,0,0)$, $(0,\infty,\infty)$, $(1,1,1)$, $(1,-1,-1)$, and their images under permutations of coordinates. There is also an extra symmetry of $\WW_4$ given by the linear fractional transformation $t \mapsto (1 + t)/(1 - t)$ on each coordinate. This symmetry will not be needed in this note;  adjoining it to $\Gamma$ does not change our results.

\begin{thm}
Let $\Gamma$ be the subgroup of the automorphisms of $\WW_4$ generated by those mentioned above:
\begin{itemize}
  \item The transpositions $\tau_{yz}, \tau_{xz}, \tau_{xy}$ of two coordinates;
  \item The nontrivial automorphism $\sigma_x$ fixing $y$ and $z$;
  \item The sign change $s_x$ that negates $y$ and $z$;
  \item The transformation $r_x$ that reciprocates $y$ and $z$.
\end{itemize}
(Observe that conjugates like $\sigma_y = \tau_{xy} \circ \sigma_x \circ \tau_{xy}^{-1}$ also belong to $\Gamma$.)
If $q \equiv 1$ mod $8$, then there is a partition $\WW_4(\FF_q) = U_1 \sqcup U_2$ into two disjoint $\Gamma$-invariant subsets, each of roughly the same size $\frac{1}{2} q^2 + O(q^{3/2})$. In particular, every $\Gamma$-orbit has size at most $\frac{1}{2} q^2 + O(q^{3/2})$.
\end{thm}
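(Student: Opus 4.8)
The plan is to trace the partition to an explicit double cover $\pi\colon\widetilde{\WW}_4\to\WW_4$, defined over $\FF_q$ when $q\equiv 1\pmod 8$, that is geometrically connected and whose isomorphism class \emph{as a cover of $\WW_4$} is fixed by $\Gamma$; that is, $\gamma^*\pi\cong\pi$ over $\FF_q$ for every generator $\gamma$ of $\Gamma$. Granting such a $\pi$, I will put $U_1=\pi\big(\widetilde{\WW}_4(\FF_q)\big)$ and $U_2=\WW_4(\FF_q)\setminus U_1$. Then for $P\in\WW_4(\FF_q)$ the fibre $\pi^{-1}(\gamma P)$ is canonically isomorphic over $\FF_q$ to the fibre over $P$ of $\gamma^*\pi$, hence to $\pi^{-1}(P)$, so it contains an $\FF_q$-point iff $\pi^{-1}(P)$ does; thus $U_1$, and hence $U_2$, is $\Gamma$-invariant.

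Equivalently, I want a rational function $g\in\FF_q(\WW_4)^\times$ that is not a square in $\overline{\FF_q}(\WW_4)^\times$ (making $w^2=g$ geometrically connected) and satisfies $\gamma^* g/g\in\big(\FF_q(\WW_4)^\times\big)^2$ for each generator. A natural first attempt is $g_0=xyz$. Passing to the quotient of $\WW_4$ by the three sign changes $s_x,s_y,s_z$, which in the coordinates $a=x/(yz)$, $b=y/(zx)$, $c=z/(xy)$, $d=xyz$ becomes $\{a+b+c+d=4,\ abcd=1\}$, one finds that $\sigma_x$ swaps the partition sums $a+d$ and $b+c$ while preserving the internal ratios, and that $a,b,c,d$ all lie in a single square class on $\WW_4$ because their pairwise products are squares there. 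Consequently $\sigma_x^*(xyz)/(xyz)\equiv(y^2+z^2)(1+y^2z^2)$ modulo squares, which is not a square (it has odd order along the curve $y=\sqrt{-1}\,z$); so $g_0$ alone fails. I will therefore correct $g_0$ by factors supported on the coordinate curves, such as $\{x=0\},\{x=\infty\},\{x=\pm1\},\{x=\pm\sqrt{-1}\}$ and $\{x=\pm\sqrt{-1}\,y\}$, together with their images under coordinate permutations, chosen so that the resulting class in $\FF_q(\WW_4)^\times/(\FF_q(\WW_4)^\times)^2$ is fixed by $\tau_{xy},\tau_{xz},\tau_{yz},s_x,r_x$ and $\sigma_x,\sigma_y,\sigma_z$. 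This is a finite linear-algebra problem over $\FF_2$ in the span of these divisor classes together with the constant class $\FF_q^\times/(\FF_q^\times)^2\cong\ZZ/2$; I expect the essentially unique nontrivial solution to be $\Gamma$-invariant only modulo the subgroup generated by $[-1]$ and $[2]$, which is exactly why the construction needs $q\equiv 1\pmod 8$: then $-1$ and $2$ are squares, and moreover $g$ --- a priori only defined over $\FF_q(\sqrt{-1})$ --- descends to $\FF_q$. Non-squareness of the final $g$ will then follow by exhibiting one component of $\mathrm{div}(g)$, defined over $\overline{\FF_q}$, along which $g$ has odd order.

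To estimate the sizes, let $\chi$ be the quadratic character of $\FF_q$. Away from the $O(q)$ points of $\WW_4(\FF_q)$ on the branch curve of $\pi$,
\[
  |U_1|=\tfrac12\,|\WW_4(\FF_q)|+\tfrac12\sum_{P\in\WW_4(\FF_q)}\chi\big(g(P)\big)+O(q).
\]
Fibring $\WW_4$ over the $z$-line by curves of genus at most $1$, the Weil bounds give $|\WW_4(\FF_q)|=q^2+O(q^{3/2})$, and since $g$ restricts to a non-square on all but finitely many fibres its character sum on each fibre is $O(q^{1/2})$, so $\sum_P\chi(g(P))=O(q^{3/2})$. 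Hence $|U_1|=\tfrac12 q^2+O(q^{3/2})$, and $|U_2|=|\WW_4(\FF_q)|-|U_1|=\tfrac12 q^2+O(q^{3/2})$. As each $\Gamma$-orbit lies entirely in $U_1$ or in $U_2$, every orbit has size at most $\tfrac12 q^2+O(q^{3/2})$.

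The crux will be the construction of $g$: identifying the right combination of coordinate-divisor classes, computing the transformation of $[g]$ under the non-linear involution $\sigma_x$, and checking that the leftover ambiguity is exactly $\langle[-1],[2]\rangle$, so that it disappears precisely when $q\equiv1\pmod 8$. The rest --- $\Gamma$-invariance of the partition and the point count --- is then formal, or a routine invocation of the Weil estimates.
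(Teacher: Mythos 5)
Your overall strategy is the paper's: produce a square class in $\FF_q(\WW_4)^\times/(\FF_q(\WW_4)^\times)^2$ fixed by $\Gamma$, let the associated double cover split $\WW_4(\FF_q)$ into the image of the $\FF_q$-points of the cover and its complement, and count. The formal parts you do carry out are sound: the deduction of $\Gamma$-invariance of $U_1$ from $\gamma^*\pi\cong\pi$, the correct observation that $xyz$ fails because $\sigma_x^*(xyz)/(xyz)\equiv(y^2+z^2)(1+y^2z^2)$ is a nonsquare, and the character-sum/fibration count (a reasonable alternative to the paper's direct appeal to the Weil conjectures for the covering surface). But the entire mathematical content of the theorem is the existence of the invariant class, and you do not construct it: you propose a search space, announce that you ``expect'' an essentially unique solution with the right behavior, and defer the verification. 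That is a plan, not a proof, and as it stands the proposal has a genuine gap exactly at the point you yourself call the crux.

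Moreover, the search space you propose --- corrections of $xyz$ by equations of coordinate curves $\{x=c\}$ and $\{x=\pm\sqrt{-1}\,y\}$ and their permutation images --- does not obviously contain the answer. The paper's invariant is $\Delta_{yz}=y(1-z^2)+\sqrt{-1}\,z(1-y^2)$, one of the two Galois-conjugate factors over $\ZZ[\sqrt{-1}]$ of the discriminant $\Delta_x$ of the defining equation viewed as a quadratic in $x$; its zero divisor is supported on (half of) the ramification curve of the projection $\WW_4\to\PP^1_y\times\PP^1_z$, an irreducible $(2,2)$-curve, not on coordinate curves, and the $2$-torsion making it a nonsquare is concentrated at the eight singular points. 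Invariance under $\sigma_x$, $s_x$, $r_x$ is then immediate, invariance under $\tau_{yz}$ follows from $\Delta_{yz}\Delta_{zy}=\tfrac{\sqrt{-1}}{4}\Delta_x$, and invariance under $\tau_{xy}$ requires the non-obvious identity $-2\Delta_{xz}\Delta_{yz}+(z^2+2z-1)f=(xyz^2+xyz-\sqrt{-1}xz-\sqrt{-1}yz-z^2+\sqrt{-1}x+\sqrt{-1}y-z)^2$; this is where $\sqrt{-2}\in\FF_q$, hence $q\equiv1\pmod 8$, genuinely enters. None of this is ``finite linear algebra over $\FF_2$ in coordinate-divisor classes,'' and your proposal would need to be substantially redirected to find it. You also still owe the non-squareness check (the paper restricts to the line $(\sqrt{-1}\,y,y,0)\subset\WW_4$, where $\Delta_{yz}$ restricts to $y$).
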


\begin{proof}
To prove this theorem, we produce a $\Gamma$-invariant function on the $\FF_q$-points of $\WW_4$. As the condition $q \equiv 1$ mod $8$ suggests, this invariant will be defined over the $8$th cyclotomic field $\QQ(\zeta_8) = \QQ(i, \sqrt{2})$.

Let $\Delta_x$ be the discriminant of the equation
\[
  f = f(x,y,z) = x^2 + y^2 + z^2 + x^2 y^2 z^2 - 4 x y z
\]
of $\WW_4$ considered as a quadratic in $x$. Note that $\Delta_x$ is reducible over $\ZZ[i]$:
\begin{align*}
  \Delta_x &= (4yz)^2 - 4(1 + y^2 z^2)(y^2 + z^2) \\
  &= 4\(y(1 - z^2) + i z (1 - y^2)\) \cdot \(y(1 - z^2) - i z (1 - y^2)\).
\end{align*}
We denote by $\Delta_{yz}$ the first factor
\[
  \Delta_{yz} = y(1 - z^2) + i z (1 - y^2)
\]
of the discriminant. We claim that it is our desired invariant:
\begin{lem}
  Let $F$ be a field, $2 \neq 0 \in F$, containing all $8$th roots of unity. The class $\Delta$ of $\Delta_{yz}$, in the multiplicative group
  \[
    F(\WW_4)^\cross / \(F(\WW_4)^\cross\)^2
  \]
  of the function field $F(\WW_4)$ modulo squares, is invariant under the group $\Gamma$.
\end{lem}

\begin{proof}
Invariance under $\sigma_x$ is clear since $\sigma_x$ preserves the $y$ and $z$ coordinates. Invariance under $s_x$ and $r_x$ is clear using $i \in F$:
\[
  s_x^*(\Delta_{yz}) = -\Delta_{yz} = i^2 \cdot \Delta_{yz}, \quad
  r_x^*(\Delta_{yz}) = \(\frac{i}{yz}\)^2 \Delta_{yz}.
\]
It remains to show that $\Delta$ is invariant under the transpositions $\tau_{uv}$ of coordinates. Denote by $\Delta_{zy}$, $\Delta_{xz}$, etc{.} the corresponding expressions where the indicated variables are substituted in place of $y$ and $z$ in $\Delta_{yz}$.

First consider $\tau_{yz}$, that is, compare $\Delta_{yz}$ and $\Delta_{zy}$. Their product is
\begin{align*}
  \Delta_{yz} \cdot \Delta_{zy}
  &= \(y(1 - z^2) + i z (1 - y^2)\) \cdot \(z(1 - y^2) + i y (1 - z^2)\) \\
  &= \(y(1 - z^2) + i z (1 - y^2)\) \cdot i\(y(1 - z^2) - i z (1 - y^2)\) \\
  &= \frac{i}{4} \Delta_x.
\end{align*}
Now $\Delta_x = \(x - \sigma_x^*(x)\)^2$ is a square in the function field. Also $i = \zeta_8^2$ is a square, so $\Delta_{yz}$ and $\Delta_{zy}$ define the same class in $F(\WW_4)^\cross / \(F(\WW_4)^\cross\)^2$.

Next look at $\tau_{xy}$, in other words compare $\Delta_{yz}$ and $\Delta_{xz}$. The product $\Delta_{yz} \cdot \Delta_{xz}$ has degree $(2, 2, 4)$ as a function of each variable separately, while $f$ has degree $(2, 2, 2)$, so it is natural to seek a quadratic $q(z) = a z^2 + b z + c$ in $z$ alone such that we have the identity
\[
  \Delta_{xz} \Delta_{yz} + q(z) f(x, y, z) = g(x, y, z)^2
\]
for some polynomial $g$ of degree $(1, 1, 2)$. Comparing the terms not involving $y$, we quickly find that $q(z) = \pm \frac{1}{2}\(z^2 \pm 2z - 1\)$ (the $\pm$ signs are independent). A short computer computation shows that indeed
\[
  -2 \Delta_{xz} \Delta_{yz} + (z^2 + 2z - 1) f = 
   (x y z^2 + x y z - i x z - i y z - z^2 + i x + i y - z)^2
\]
as an identity in $\ZZ[i, x, y, z]$. Since $\sqrt{-2} = \zeta_8 + \zeta_8^3 \in F$, this shows the desired invariance under $\tau_{xy}$, completing the proof that $\Delta$ is $\Gamma$-invariant.
\end{proof}

It is necessary to check that $\Delta$ is not constant, that is, $\Delta_{yz}$ is not a constant times a square in $F(\WW_4)$. To this end, note that $\WW_4$ contains the line $\L = \{\(iy, y, 0\)\}$. We have $\Delta_{yz} |_{\L} = y$, which is not a square in $F(\L)$.

We can use $\Delta$ to define a double cover of $\WW_4$. For a concrete construction, let $\X_0$ be the subvariety of $\(\PP^1\)^9$ defined by (the projectivization of) the following affine equations:
\[
  \X_0 = \left\{(x,y,z, \delta_{xy}, \delta_{xz}, \delta_{yx}, \delta_{yz}, \delta_{zx}, \delta_{zy}) : 
  f(x,y,z) = 0 \text{ and } \delta_{uv}^2 = \Delta_{uv} \text{ for all distinct } u, v \in \{x,y,z\} \right\}.
\]
Then let $\X$ be an irreducible component of $\X_0$. Since the ratios of the $\Delta_{uv}$ are squares but the $\Delta_{uv}$ themselves are not, we see that the natural projection $\pi \colon \X \to \WW_4$ is a double cover. We claim that $\pi$ is ramified only at the eight singular points. When $x, y, z$ are finite, ramification can only occur when all $\Delta_{uv}$ are zero. This happens only when $\Delta_x = \Delta_y = \Delta_z = 0$, that is, $(x,y,z)$ is a singular point. If $x = \infty$, then $y = \pm i/z$, so $\Delta_{yz}$ is nonzero unless $y$ or $z$ is infinite, which occurs only at the singular points.

Let $U$ be the image of
\[
  \pi : \X(\FF_q) \to \WW_4(\FF_q);
\]
in other words, $U$ is the set of $(x,y,z) \in \WW_4(\FF_q)$ such that some $\Delta_{uv}$ is a nonzero square, plus the singular points. Note that $U$ is $\Gamma$-invariant, since $\Delta$ is $\Gamma$-invariant. Moreover, $\pi$ is exactly $2$-to-$1$ onto $U$, except at the eight singular points, which each have only one preimage. Thus, using the Weil conjectures for $\X$, we have
\[
  \size{U} = \frac{\size{\X(\FF_q)} + 8}{2} = \frac{1}{2} q^2 + O\(q^{3/2}\),
\]
yielding the desired splitting
\[
  \WW_4(\FF_q) = U \sqcup \big(\WW_4(\FF_q) \setminus U\big).
\]
The implied constant is absolute and depends only on the topology of $\X$ considered as a variety over $\CC$, which we do not study here.
\end{proof}

For $q \equiv 5$ mod $8$, the same proof yields:
\begin{thm}
Let $q \equiv 5$ mod $8$ be a prime power. Let $\Gamma' \subset \Gamma$ be the subgroup of index $2$ generated by $\sigma_x$, $s_x$, $r_x$, and the $3$-cycle $\tau_{xy}\tau_{yz}$. Then there is a partition of the nonsingular points of $\WW_4(\FF_q)$ into two disjoint $\Gamma'$-invariant subsets of the same size, which are interchanged by $\tau_{xy}$. In particular, every $\Gamma'$-orbit on $\WW_4(\FF_q)$ has size at most $\frac{1}{2} q^2 + O\(q^{3/2}\)$.
\end{thm}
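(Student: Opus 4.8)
The plan is to rerun the proof above, tracking which square-class identities survive when $q \equiv 5 \pmod 8$ rather than $q \equiv 1 \pmod 8$. The arithmetic change is that, although $i \in \FF_q$ (since $4 \mid q-1$), now $2$ and $i$ are non-squares in $\FF_q$ while $-1$ and $-2i$ are squares. First I would revisit the Lemma: its proof of the invariance of the class $\Delta$ of $\Delta_{yz}$ under $\sigma_x$, $s_x$, and $r_x$ only used $i \in \FF_q$, so it is unchanged, and the line $\L = \{(iy,y,0)\}$ still shows that $\Delta$ is non-constant. For the transpositions, the very identities in the Lemma's proof, $\Delta_{yz}\Delta_{zy} = \tfrac{i}{4}\Delta_x$ and $-2\Delta_{xz}\Delta_{yz} + (z^2+2z-1)f = g^2$, now give $\tau_{yz}^*\Delta = i\,\Delta$ and $\tau_{xy}^*\Delta = -2\,\Delta$ in $\FF_q(\WW_4)^\cross/(\FF_q(\WW_4)^\cross)^2$, i.e.\ every transposition multiplies $\Delta$ by the nontrivial class. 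This defines a character $\chi \colon \Gamma \to \FF_q^\cross/(\FF_q^\cross)^2 \isom \{\pm 1\}$, and the one computation to perform is that the $3$-cycle $\tau_{xy}\tau_{yz}$ multiplies $\Delta$ by $(i)(-2) = -2i$, which is a square in $\FF_q$ precisely when $q \equiv 5 \pmod 8$. Hence $\Gamma' \subseteq \ker\chi$ and $\Delta$ is $\Gamma'$-invariant, while each transposition (in particular $\tau_{xy}$) multiplies $\Delta$ by a non-square.

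Next, exactly as before, I would form the double cover $\pi\colon\X\to\WW_4$ attached to $\Delta$ — concretely an irreducible component of $\{\delta_{xy}^2=\Delta_{xy},\ \delta_{yz}^2=\Delta_{yz},\ \delta_{zx}^2=\Delta_{zx}\}$, using only the three cyclic discriminants, whose pairwise ratios lie in $(\FF_q(\WW_4)^\cross)^2$ when $q\equiv 5\pmod 8$ (the constants that occur, such as $2i$, being squares in $\FF_q$). The simultaneous-vanishing computation from the theorem shows $\Delta_{xy}=\Delta_{yz}=\Delta_{zx}=0$ forces $\Delta_x=\Delta_y=\Delta_z=0$, i.e.\ a singular point (and likewise at points with a coordinate $\infty$), so $\pi$ is ramified only over the eight singular points, each with a single $\FF_q$-rational preimage. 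Since $\Gamma'$ fixes the class $\Delta$, each of its generators lifts to an automorphism of $\X$ over $\WW_4$, so $U := \pi(\X(\FF_q))$ is $\Gamma'$-invariant; I set $U_1 = U \setminus\{\text{the eight singular points}\}$ and $U_2 = \WW_4(\FF_q)\setminus U$, both $\Gamma'$-invariant, with $U_1 \sqcup U_2$ equal to the set of nonsingular $\FF_q$-points.

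Finally I would show that $\tau_{xy}$ interchanges $U_1$ and $U_2$. Since $\tau_{xy}^*\Delta = -2\,\Delta$ with $-2$ a non-square, the pullback $\tau_{xy}^*\X$ is the quadratic twist $\X'$ of $\X$ by $-2$; over a nonsingular $\FF_q$-point $P$, where $\pi$ is étale, the cover is locally of the form $\delta^2 = u$ for a unit $u$ of $\mathcal{O}_{\WW_4,P}$, so $\X$ has $\FF_q$-points over $P$ iff $u(P)$ is a square and $\X'$ does iff $-2u(P)$ is — i.e.\ iff $u(P)$ is a non-square; as $u(P)\neq 0$, exactly one holds, so the images of $\X(\FF_q)$ and $\X'(\FF_q)$ partition the nonsingular $\FF_q$-points. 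Tracing fibers through the pullback, $\tau_{xy}(U) = \pi\big((\tau_{xy}^*\X)(\FF_q)\big)$ equals the image of $\X'(\FF_q)$, which is $U_2$ together with the (permuted) singular points, so $\tau_{xy}(U_1) = U_2$. In particular $\size{U_1}=\size{U_2}$, and since $U_1\sqcup U_2 = \WW_4(\FF_q)\setminus\{\text{8 points}\}$, the Weil bound applied to $\X$ (or to $\WW_4$ directly) gives $\size{U_1}=\size{U_2}=\tfrac{1}{2} q^2 + O(q^{3/2})$; the orbit bound is then immediate from $\Gamma'$-invariance. I expect the main obstacle to be this last step: verifying that $\X$ and its twist split the nonsingular points into two exactly equal $\Gamma'$-invariant halves, which relies on the branch locus being only those eight points (so there is genuinely no overlap region) and on choosing the component $\X$ so that $\Gamma'$, and not merely the class $\Delta$, acts on it — both reducing to computations essentially carried out in the proof above.
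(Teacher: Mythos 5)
Your proposal is correct and takes essentially the same route as the paper, which proves this theorem simply by saying the $q\equiv 1\pmod 8$ argument carries over; you have supplied exactly the details left implicit there (the square-class multipliers $i$ and $-2$ picked up by the transpositions, the square multiplier $-2i$ for the $3$-cycle, the cover built from the three cyclic discriminants, and the interchange of the two halves by $\tau_{xy}$). One harmless slip: $-2i$ is a square in $\FF_q$ for every $q\equiv 1\pmod 4$, not ``precisely when $q\equiv 5\pmod 8$,'' but only the implication you actually use is needed.
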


\bibliography{../Master.bib}

\begin{thebibliography}{1}

\bibitem{Baragar}
Arthur Baragar.
\newblock {\em The {M}arkoff equation and equations of {H}urwitz}.
\newblock ProQuest LLC, Ann Arbor, MI, 1991.
\newblock Thesis (Ph.D.)--Brown University.

\bibitem{BGS_I}
Jean Bourgain, Alexander Gamburd, and Peter Sarnak.
\newblock Markoff surfaces and strong approximation: 1, 2016.
\newblock Preprint, available at \url{https://arxiv.org/abs/1607.01530}.

\bibitem{BGS16_short}
Jean Bourgain, Alexander Gamburd, and Peter Sarnak.
\newblock Markoff triples and strong approximation.
\newblock {\em C. R. Math. Acad. Sci. Paris}, 354(2):131--135, 2016.

\bibitem{ChenMarkoff}
William Chen.
\newblock Nonabelian level structures, {N}ielsen equivalence, and {M}arkoff
  triples, 2020.
\newblock Preprint, available at \url{https://arxiv.org/abs/2011.12940}.

\bibitem{SilvermanK3}
Elena Fuchs, Matthew Litman, Joseph~H. Silverman, and Austin Tran.
\newblock Orbits on {K}3 surfaces of {M}arkoff type, 2022.
\newblock Preprint, available at \url{https://arxiv.org/abs/2201.12588}.

\bibitem{MarkoffI}
A.~Markoff.
\newblock Sur les formes quadratiques binaires ind\'{e}finies.
\newblock {\em Math. Ann.}, 15:381--406, 1879.

\end{thebibliography}
\bibliographystyle{plain}

\end{document}